\newtheorem{lem}{\textbf{Lemma}}[section]
\newtheorem{theorem}[lem]{\textbf{Theorem}}
\theoremstyle{definition}
\theoremstyle{definition}
\theoremstyle{remark}
   \thanks{\!\!\!\!\!\!\!\! \!\!${^{*}}$Corresponding author \\  2010 Mathematics Subject Classification: 47H09; 47H10.\\Keywords:  Relaxed $(u, v)$-cocoercive mapping; Strong convergence; $\alpha$-expansive mapping. \\ E-mail address:  sori.e@lu.ac.ir(E. Soori), donal.oregan@nuigalway.ie(D. O$'$Regan), agarwal@tamuk.edu(R. P. Agarwal).}
\begin{document}
\title[A Simple proof      for   Imnang's algorithms ]{A Simple proof      for   Imnang's algorithms }
\author[ Soori,   O$'$Regan, Agarwal]{ Ebrahim Soori$^{*,1}$, Donal O$'$Regan$^{2}$ and  Ravi P. Agarwal$^{3}$}
\address{$^{1}$ Department  of Mathematics, Lorestan University, P.O. Box 465, Khoramabad, Lorestan, Iran, $^{2}$ School of Mathematical and  Statistical Sciences, University of Galway, Galway, Ireland, $^{3}$ Department of Mathematics
 Texas A$\&$M University-Kingsville 700 University Blvd., MSC 172 Kingsville, Texas, USA.}


\date{}


%

\begin{abstract}
In this paper,      a  simple  proof of the convergence of  the recent iterative algorithm  by relaxed $(u, v)$-cocoercive mappings   due to  S. Imnang [S. Imnang,   Viscosity iterative method for a new general system of variational   inequalities in Banach spaces,  J. Inequal. Appl., 249:18 pp., 2013.] is presented.
\end{abstract}


\maketitle

\section{Introduction and Preliminaries}
 In this paper,    a  simple proof   for the convergence of an   iterative algorithm  is  presented which improves and refines the original  proof.

Suppose that $C$ is a nonempty closed   convex subset of a real normed linear  space $E$ and
$E^{*}$ is its dual space.  Suppose that  $\langle.,.\rangle$   denotes the pairing between $E$ and $E^{*}$. The
normalized duality mapping $J: E \rightarrow E^{*}$
is defined by
\begin{align*}
    J(x)=\{f \in E^{*}: \langle x, f \rangle= \|x\|^{2}=\|f\|^{2} \}
\end{align*}
for each $x \in E$.  Let  $U = \{x \in E : \|x\| = 1\}$.   A Banach space $E$ is called  smooth if for all $x \in U$,
there exists a unique functional $j_{x} \in E^{*}$ such that $\langle x, j_{x}\rangle = \|x\|$ and $\|j_{x}\| = 1$ (see \cite{Ag,Soori2021A Strong,Soori2021Strong convergence}).

 For a map $T$  from $ E $ into itself, we denote by $\rm{Fix}(T ) := \{x \in E : x = Tx\}$, the fixed point set of $T$.

   Recall the following well known  concepts:
\begin{enumerate}
\item Suppose that $C$ is a nonempty closed convex subset of a real Banach space $ E $.  A mapping $B: C \rightarrow E$ is  called    relaxed $(u, v)$-cocoercive~\cite{[5]Imnang2013viscosityInterationMethod}, if there exist two constants $u, v > 0$ such that
 \begin{equation*}
    \langle Bx - By , j(x - y)\rangle \geq (-u)\|Bx - By\|^{2}+v\|x - y\|^{2},
 \end{equation*}
   for  all  $x, y \in C$ and $j(x - y) \in J(x - y)$.
\item Suppose that $C$ is a nonempty closed convex subset of a real Banach space $ E $ and $B$ is a  self  mapping on $C$. If there exists      a positive  integer $\alpha$ such that
 \begin{equation*}\label{golf2}
      \| Bx -By\| \geq \alpha\| x -  y\|.
  \end{equation*}
  for  all  $x, y \in C$, then $B$ is called  $\alpha$-expansive.
\end{enumerate}

\begin{lem}\cite{[5]Imnang2013viscosityInterationMethod}\label{ert}
Let $C$ be a nonempty closed convex subset of a real  $2$-uniformly smooth
Banach space $X$ with the $2$-uniformly smooth constant $K$. Let $Q_{C}$ be the sunny nonexpansive retraction from $X$ onto $C$ and let $A_{i} : C \rightarrow X$ be a relaxed $(c_{i}, d_{i})$-cocoercive and
Li-Lipschitzian mapping for $i = 1, 2, 3$. Let $G : C \rightarrow C$ be a mapping defined by
\begin{align*}
  G(x) = & Q_{C}   \big[Q_{C}\big (Q_{C}(x-\lambda_{3}A_{3}x)-\lambda_{2}A_{2}Q_{C}(x-\lambda_{3}A_{3}x)\big)   \\
    & -\lambda_{1}A_{1}Q_{C}\big(Q_{C}(I-\lambda_{3}A_{3})x-\lambda_{2}A_{2}Q_{C}(I-\lambda_{3}A_{3})x\big)\big ]
\end{align*}
 If $\lambda _{i} \leq \frac{d_{i}-c_{i} L_{i}^{2}}{K^{2}L_{i}^{2}}$ for all $i = 1, 2, 3$,  then $G : C \rightarrow C$ is nonexpansive.
\end{lem}
\begin{lem}\cite[Lemma 2.8]{[2]Cai2011Strong convergence} \label{jdjk}
 Suppose that $C$ is a nonempty closed convex subset of a real  Banach space $X$ which is $2$-uniformly smooth, and  the mapping $A:C\rightarrow X$ is relaxed $(c,d)$-cocoercive and $L_{A}$-Lipschitzian. Then  $$\|(I-\lambda A)x-(I-\lambda A)y\|^{2} \leq \|x-y\|^{2}+2(\lambda cL^{2}_{A}-\lambda d+K^{2}\lambda^{2}L^{2}_{A})\|x-y\|^{2},$$
 where $\lambda >0$. In particular, when  $d > c L^{2}_{A}$ and $ \lambda \leq \frac{d-c L^{2}_{A}}{K^{2}L^{2}_{A}} $,  note $I-\lambda A$ is nonexpansive.
\end{lem}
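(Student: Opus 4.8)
The plan is to reduce everything to the fundamental inequality characterizing $2$-uniformly smooth Banach spaces. Recall (see e.g.\ H.-K.\ Xu, \emph{Inequalities in Banach spaces with applications}) that if $X$ is $2$-uniformly smooth then there is a constant $K > 0$ --- the very one appearing in the statement --- such that
$$\|a + b\|^{2} \leq \|a\|^{2} + 2\langle b, j(a)\rangle + 2\|Kb\|^{2}$$
for all $a, b \in X$ and $j(a) \in J(a)$. First I would fix $x, y \in C$, set $z = x - y$, and observe that $(I - \lambda A)x - (I - \lambda A)y = z - \lambda(Ax - Ay)$. Applying the displayed inequality with $a = z$ and $b = -\lambda(Ax - Ay)$ yields
$$\|(I - \lambda A)x - (I - \lambda A)y\|^{2} \leq \|x - y\|^{2} - 2\lambda\langle Ax - Ay, j(x - y)\rangle + 2K^{2}\lambda^{2}\|Ax - Ay\|^{2}.$$

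Next I would feed in the two structural hypotheses on $A$. Relaxed $(c,d)$-cocoercivity gives $\langle Ax - Ay, j(x-y)\rangle \geq -c\|Ax - Ay\|^{2} + d\|x - y\|^{2}$, hence (using $\lambda > 0$) $-2\lambda\langle Ax - Ay, j(x-y)\rangle \leq 2\lambda c\|Ax - Ay\|^{2} - 2\lambda d\|x - y\|^{2}$. Substituting this into the bound above and then using $L_{A}$-Lipschitzness, $\|Ax - Ay\|^{2} \leq L_{A}^{2}\|x - y\|^{2}$, in the two places where $\|Ax - Ay\|^{2}$ occurs with a nonnegative coefficient, collapses the right-hand side to
$$\|x - y\|^{2} + 2\big(\lambda c L_{A}^{2} - \lambda d + K^{2}\lambda^{2}L_{A}^{2}\big)\|x - y\|^{2},$$
which is the asserted estimate.

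Finally, for the ``in particular'' clause I would simply analyse the sign of the coefficient. Factoring, $\lambda c L_{A}^{2} - \lambda d + K^{2}\lambda^{2}L_{A}^{2} = \lambda\big(c L_{A}^{2} - d + K^{2}\lambda L_{A}^{2}\big)$, and since $\lambda > 0$ this is $\leq 0$ exactly when $K^{2}\lambda L_{A}^{2} \leq d - c L_{A}^{2}$, i.e.\ when $d > c L_{A}^{2}$ and $\lambda \leq \frac{d - c L_{A}^{2}}{K^{2}L_{A}^{2}}$, which is precisely the stated hypothesis. Under that condition the coefficient is nonpositive, so $\|(I - \lambda A)x - (I - \lambda A)y\|^{2} \leq \|x - y\|^{2}$, i.e.\ $I - \lambda A$ is nonexpansive. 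The only genuinely non-routine ingredient is the $2$-uniform smoothness inequality with its constant $K$; once that is invoked, the remainder is a direct chain of substitutions, and I do not anticipate any real obstacle.
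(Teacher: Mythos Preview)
Your proof is correct and is precisely the standard argument for this result. Note that the paper does not supply its own proof of this lemma at all: it is quoted verbatim from Cai and Bu \cite[Lemma 2.8]{[2]Cai2011Strong convergence}, and the argument you wrote is essentially the one given there (Xu's $2$-uniform smoothness inequality followed by direct substitution of the cocoercivity and Lipschitz bounds).
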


In this paper, using   relaxed $(u, v)$-cocoercive mappings,    a new  proof for   the iterative algorithm \cite{[5]Imnang2013viscosityInterationMethod} is presented.

\section{A simple proof for the theorem }
S. Imnang \cite{[5]Imnang2013viscosityInterationMethod}
considered an iterative algorithm  for finding a common element of the set of
fixed points of nonexpansive mappings and the set of  solutions of a variational
inequality. Our argument will rely on the following lemma.
\begin{lem}\label{sgjm}
     Suppose that $C$ is a nonempty closed convex subset of a Banach space $E$. Suppose that $A: C \rightarrow E$ is    a relaxed $(m, v)$-cocoercive mapping  and $\epsilon$-Lipschitz continuous with $v-m \epsilon^{2}>0$.  Then  $A$  is    a   $(v-m \epsilon^{2})$-expansive mapping.
\end{lem}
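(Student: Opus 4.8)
The plan is to merge three elementary ingredients into one chain of inequalities. The starting point is the defining inequality of a relaxed $(m,v)$-cocoercive map: for all $x,y\in C$ there is $j(x-y)\in J(x-y)$ with
\begin{equation*}
\langle Ax - Ay,\, j(x-y)\rangle \;\geq\; -m\|Ax-Ay\|^{2} + v\|x-y\|^{2}.
\end{equation*}

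First I would bound the left-hand side from above. Since $j(x-y)\in J(x-y)$ we have $\|j(x-y)\|=\|x-y\|$, so the Cauchy--Schwarz inequality for the pairing $\langle\cdot,\cdot\rangle$ gives $\langle Ax-Ay,\,j(x-y)\rangle \leq \|Ax-Ay\|\,\|x-y\|$. Next I would control the term $-m\|Ax-Ay\|^{2}$ from below: because $m>0$ and $A$ is $\epsilon$-Lipschitz continuous, $\|Ax-Ay\|^{2}\leq \epsilon^{2}\|x-y\|^{2}$, hence $-m\|Ax-Ay\|^{2}\geq -m\epsilon^{2}\|x-y\|^{2}$. Substituting both estimates into the cocoercivity inequality yields
\begin{equation*}
\|Ax-Ay\|\,\|x-y\| \;\geq\; -m\epsilon^{2}\|x-y\|^{2} + v\|x-y\|^{2} \;=\; (v-m\epsilon^{2})\|x-y\|^{2}.
\end{equation*}

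Finally, if $x\neq y$ I would divide both sides by $\|x-y\|>0$ to obtain $\|Ax-Ay\|\geq (v-m\epsilon^{2})\|x-y\|$, while the case $x=y$ is trivial; since $v-m\epsilon^{2}>0$ by hypothesis, this is precisely the statement that $A$ is $(v-m\epsilon^{2})$-expansive. I do not expect a genuine obstacle here: the proof is a two-line estimate. The only points needing a little care are the correct orientation of the two auxiliary bounds (an \emph{upper} bound on the duality pairing, a \emph{lower} bound on $-m\|Ax-Ay\|^{2}$ obtained from the Lipschitz condition together with $m>0$), and the harmless remark that the expansion constant $v-m\epsilon^{2}$ need not be an integer, so the term ``$\alpha$-expansive'' is read here with a positive real constant $\alpha$.
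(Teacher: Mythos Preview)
Your proof is correct and follows essentially the same route as the paper: combine the cocoercivity inequality with the Lipschitz bound to get $\langle Ax-Ay,\,j(x-y)\rangle \geq (v-m\epsilon^{2})\|x-y\|^{2}$, then pass to $\|Ax-Ay\|\geq (v-m\epsilon^{2})\|x-y\|$. The paper leaves the last implication (the Cauchy--Schwarz step and the division by $\|x-y\|$) implicit, whereas you spell it out explicitly, but the argument is the same.
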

\begin{proof}
Since $A$ is $(m, v)$-cocoercive and  $\epsilon$-Lipschitz continuous, for each $x,y \in C$ and $j(x - y) \in J(x - y)$, we have that
\begin{align*}\label{jhgjgl}
  \langle Ax-Ay, j(x-y) \rangle& \geq(-m)\| Ax-Ay\|^{2}+v\|x-y \|^{2} \nonumber \\ \nonumber &\geq (-m \epsilon^{2})\|x-y \|^{2} +v\|x-y \|^{2}\\  &= (v-m \epsilon^{2})\|x-y \|^{2}\geq 0,
\end{align*}
 and   hence
 \begin{equation*}
   \| Ax-Ay\|\geq  (v-m \epsilon^{2})\|x-y \|,
 \end{equation*}
so $A$ is   $(v-m \epsilon^{2})$-expansive.
\end{proof}
The following   theorem   is due to  S. Imnang \cite{[5]Imnang2013viscosityInterationMethod} that  solves the viscosity iterative problem for a new general
system of variational inequalities in Banach spaces:

\begin{theorem}\label{mkkjjhgyj}{ (\rm {\em i.e.}, Theorem 3.1, from~\cite[\S 3, p.7]{[5]Imnang2013viscosityInterationMethod})}
Suppose that $X$ is a Banach space which is     uniformly convex and $2$-uniformly smooth  with
the $2$-uniformly smooth constant $K$,   $C$ is a nonempty closed convex subset of $X$ and
$Q_{C}$ is a sunny nonexpansive retraction from $X$ onto $C$. Assume that  $A_{i}: C  \rightarrow X$ is
relaxed $(c_{i}, d_{i})$-cocoercive and $L_{i}$-Lipschitzian with $0 < \lambda_{i} < \frac{d_{i}-c_{i}L_{i}^{2}}{K^{2}L_{i}^{2}}$
  for each $i = 1, 2, 3$. Suppose that  $f$
is a contraction  mapping with the constant $\alpha \in (0, 1)$ and   $S: C\rightarrow C$, a nonexpansive
mapping such that $ \Omega= F(S)\cap F(G)\neq \emptyset$, where $G$  is  defined as in Lemma 1.1.
Suppose that  $x_{1} \in C$ and $\{x_{n}\}$, $\{y_{n}\}$ and $\{z_{n}\}$ are the following sequences:
\begin{equation*}
\begin{cases}
z_{n}=Q_{C}(x_{n}-\lambda_{3}A_{3}x_{n}),\\
y_{n} = Q_{C}(z_{n}-\lambda_{2}A_{2}z_{n}),\\
x_{n+1} =  a_{n} f(x_{n}) + b_{n} x_{n} +  (1-a_{n}- b_n)SQ_{C}(y_{n}-\lambda_{1}A_{1}y_{n}, &\text{\quad}\\
\end{cases}
\end{equation*}
where  $\{a_{n}\}$ and $\{b_{n}\}$ are two sequences in (0, 1) such that
\begin{compactenum}[(C1)]
\item $\lim_{n \rightarrow \infty}  a_{n} = 0$ and
$\sum_{n=1}^{\infty}a_{n} =\infty$;
\item $0< \displaystyle\lim inf_{n\rightarrow \infty} b_{n} \leq \displaystyle\lim sup_{ n\rightarrow \infty} b_{n} < 1.$
\end{compactenum}
Then $\left\{x_n\right\}$   converges strongly to $q\in \Omega$, which solves the following variational inequality:
$$\langle q-f(q), J(q-p)\rangle \leq 0, \forall f \in \Pi_{C} , p \in \Omega.$$
\end{theorem}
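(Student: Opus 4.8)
The plan is to follow the standard viscosity-approximation scheme, but to use Lemma \ref{sgjm} to replace the somewhat delicate contraction/boundedness estimates in Imnang's original argument with a cleaner observation: each map $I-\lambda_i A_i$ is nonexpansive (by Lemma \ref{jdjk}, since $\lambda_i<\frac{d_i-c_iL_i^2}{K^2L_i^2}$ forces $d_i>c_iL_i^2$), hence the composite $G=Q_C(I-\lambda_1A_1)Q_C(I-\lambda_2A_2)Q_C(I-\lambda_3A_3)$ is nonexpansive, and so the map $W_n x := SG x$ appearing in the last line of the iteration is nonexpansive with $F(S)\cap F(G)=\Omega$. Writing $x_{n+1}=a_nf(x_n)+b_nx_n+(1-a_n-b_n)W x_n$ where $Wx := SQ_C(y-\lambda_1A_1 y)$ with $y,z$ built from $x$, the whole recursion becomes a genuine viscosity iteration of the form $x_{n+1}=a_nf(x_n)+b_nx_n+c_nWx_n$ for a single nonexpansive $W$ with $\mathrm{Fix}(W)=\Omega$.

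The key steps, in order, are: (1) Verify that $I-\lambda_iA_i$ is nonexpansive on $C$ via Lemma \ref{jdjk}, so $Q_C(I-\lambda_iA_i)$ is nonexpansive (sunny nonexpansive retraction is nonexpansive), hence $G$ and $W=SG$ are nonexpansive and $\mathrm{Fix}(W)=\Omega\neq\emptyset$. Here the role of Lemma \ref{sgjm} / $\alpha$-expansiveness is to give a transparent description of when $A_i$ is injective/zero-free on $\Omega$ and to streamline the fixed-point identification $F(G)=\bigcap_i \mathrm{VI}(C,A_i)$ without re-deriving the cocoercive inequalities. (2) Show $\{x_n\}$ is bounded: since $W$ is nonexpansive and $f$ is an $\alpha$-contraction, a routine induction with $p\in\Omega$ gives $\|x_{n+1}-p\|\le\max\{\|x_1-p\|,\frac{1}{1-\alpha}\|f(p)-p\|\}$; boundedness of $\{Wx_n\}$, $\{y_n\}$, $\{z_n\}$, $\{f(x_n)\}$ follows. (3) Show $\|x_{n+1}-Wx_n\|\to0$: from (C1)--(C2), $a_n\to0$ and $\liminf b_n,\ \limsup b_n$ bounded away from $0,1$ give $(1-a_n-b_n)$ bounded away from $0$, and $\|x_{n+1}-x_n\|\to0$ follows from a standard argument (e.g. the Maingé/“$x_{n+1}=\beta_n x_n+(1-\beta_n)v_n$” lemma, or directly estimating $\|x_{n+1}-x_n\|$), whence $\|x_n-Wx_n\|\to0$. (4) Identify the limit: let $q=Q_\Omega f(q)$ be the unique fixed point of the contraction $Q_\Omega f$ on $\Omega$; using the sunny nonexpansive retraction property one shows $\limsup_n\langle f(q)-q, J(x_n-q)\rangle\le0$ by passing to a subsequence and using demiclosedness of $I-W$ (available since $X$ is uniformly convex). (5) Conclude strong convergence $x_n\to q$ by the standard inequality $\|x_{n+1}-q\|^2\le(1-(1-\alpha)a_n)\|x_n-q\|^2+a_n\cdot o(1)$ together with $\sum a_n=\infty$ and Xu's lemma on recursive sequences.

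The main obstacle will be step (4), the identification of the cluster points of $\{x_n\}$ with the solution $q$ of the variational inequality. This requires the demiclosedness principle for $I-W$ at $0$ in a uniformly convex (and $2$-uniformly smooth, hence reflexive, with weakly continuous duality-type behaviour governed by $J$) Banach space, plus the characterization of the sunny nonexpansive retraction $Q_\Omega$ via $\langle x-Q_\Omega x, J(y-Q_\Omega x)\rangle\le0$; assembling these into $\limsup\langle f(q)-q,J(x_n-q)\rangle\le0$ is the technical heart. I expect steps (1)--(3) and (5) to be routine once the reduction to a single nonexpansive $W$ is in place, and the contribution of the paper is precisely that Lemma \ref{sgjm} makes this reduction clean: the $\alpha$-expansiveness of each $A_i$ (with $\alpha=d_i-c_iL_i^2$) lets one avoid Imnang's case analysis and directly read off that the only way $x\in C$ lies in $\mathrm{Fix}(G)$ is $A_ix=0$ for all $i$, i.e. $\mathrm{Fix}(G)=\bigcap_i A_i^{-1}(0)$, so $\Omega=F(S)\cap\bigcap_i A_i^{-1}(0)$ is exactly the solution set of the general system of variational inequalities, closing the loop.
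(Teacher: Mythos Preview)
Your outline is essentially a reconstruction of Imnang's original five-step viscosity argument: reduce to a single nonexpansive map $W=SG$, prove boundedness, asymptotic regularity, demiclosedness, and finish with Xu's lemma. That is a legitimate route to the theorem, but it is \emph{not} the argument the paper gives, and in particular you have misidentified the role of Lemma~\ref{sgjm}.

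The paper's proof is far shorter and rests on two observations you do not use. First, because the inequality $0<\lambda_i<\frac{d_i-c_iL_i^{2}}{K^{2}L_i^{2}}$ is \emph{strict}, Lemma~\ref{jdjk} yields $1+2(\lambda_ic_iL_i^{2}-\lambda_id_i+K^{2}\lambda_i^{2}L_i^{2})<1$, so each $I-\lambda_iA_i$ is a genuine contraction, not merely nonexpansive; hence $G$ is a contraction and, by Banach's principle, $F(G)$ and therefore $\Omega$ is a singleton $\{p\}$. Second, the paper does not redo your steps (2)--(5) at all: it simply quotes Imnang's own intermediate estimate (his (3.12)) that $\|A_{3}x_{n}-A_{3}p\|\to 0$, and then invokes Lemma~\ref{sgjm}, which says $A_{3}$ is $(d_{3}-c_{3}L_{3}^{2})$-expansive, to convert this immediately into $\|x_{n}-p\|\to 0$. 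The variational inequality in the conclusion is then trivial because $\Omega=\{p\}$ forces $q=p$ and $J(0)=\{0\}$. That is the entire argument; there is no boundedness step, no asymptotic regularity, no demiclosedness, no Xu lemma.

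So the role you assign to Lemma~\ref{sgjm} --- ``streamlining the fixed-point identification $F(G)=\bigcap_{i}\mathrm{VI}(C,A_i)$'' or deducing $\mathrm{Fix}(G)=\bigcap_iA_i^{-1}(0)$ --- is not what the paper does, and the latter identity is unjustified in any case. Expansiveness is used only at the very end, to pull convergence back from $A_{3}x_{n}$ to $x_{n}$. (A smaller gap in your own scheme: with $G$ taken merely nonexpansive, the equality $\mathrm{Fix}(SG)=F(S)\cap F(G)$ you assert for $W=SG$ is not automatic; it becomes obvious only once one notices, as the paper does, that $G$ is actually a strict contraction.) As written, your plan reproves Imnang's theorem rather than simplifying it.
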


    \textbf{A simple Proof:}\label{bhsskkl54ww}
 Let $i=1, 2,3$. Consider Theorem~\ref{mkkjjhgyj}  and the  $L_{i}$-Lipschitz continuous and   relaxed $(c_{i},d_{i} )$-cocoercive mapping   $A_{i}$ in  Theorem ~\ref{mkkjjhgyj}. From
the condition that $0 < \lambda_{i} < \frac{d_{i}-c_{i}L_{i}^{2}}{K^{2}L_{i}^{2}}$,    we have that $0<1+2(\lambda_{i} c_{i} L^{2}_{i}-\lambda _{i} d_{i}+ K^{2}\lambda^{2}_{i}L^{2}_{i})<1$. Note that    from    Lemma \ref{jdjk}, we have that $I-\lambda _{i} A_{i}$ is nonexpansive when $0<1+2(\lambda_{i} c_{i} L^{2}_{i}-\lambda _{i} d_{i}+ K^{2}\lambda^{2}_{i}L^{2}_{i})$.  Then applying the  coefficients $\alpha_{i}=1+2(\lambda_{i} c_{i} L^{2}_{i}-\lambda _{i} d_{i}+ K^{2}\lambda^{2}_{i}L^{2}_{i})$ in     Lemma \ref{jdjk} we have that  $I-\lambda _{i} A_{i}$ is     $\alpha_{i}$-contraction, for each $i=1,2,3$.  Also, note that $Q_{C}$ is nonexpansive and $I-\lambda _{i} A_{i}$ is   $\alpha_{i}$-contraction, for each $i=1,2,3$. Hence, using    the proof of  \cite[Lemma 2.11]{[5]Imnang2013viscosityInterationMethod},   we conclude that
\begin{align*}
  \|G(x) - G(y)\|  =&\big \| Q_{C}   \big[Q_{C}\big (Q_{C}(I-\lambda_{3}A_{3})x-\lambda_{2}A_{2}Q_{C}(I-\lambda_{3}A_{3})x\big)   \\
    & -\lambda_{1}A_{1}Q_{C}\big(Q_{C}(I-\lambda_{3}A_{3})x-\lambda_{2}A_{2}Q_{C}(I-\lambda_{3}A_{3})x\big)\big ]\\
     & - Q_{C}   \big[Q_{C}\big (Q_{C}(I-\lambda_{3}A_{3})y-\lambda_{2}A_{2}Q_{C}(I-\lambda_{3}A_{3})y\big)   \\
    & -\lambda_{1}A_{1}Q_{C}\big(Q_{C}(I-\lambda_{3}A_{3})y-\lambda_{2}A_{2}Q_{C}(I-\lambda_{3}A_{3})y\big)\big ]\big\|
   \\ \leq &\big \|    Q_{C}\big (Q_{C}(I-\lambda_{3}A_{3})x-\lambda_{2}A_{2}Q_{C}(I-\lambda_{3}A_{3})x\big)   \\
    & -\lambda_{1}A_{1}Q_{C}\big(Q_{C}(I-\lambda_{3}A_{3})x-\lambda_{2}A_{2}Q_{C}(I-\lambda_{3}A_{3})x\big)\\
     & -   \big[Q_{C}\big (Q_{C}(I-\lambda_{3}A_{3})y-\lambda_{2}A_{2}Q_{C}(I-\lambda_{3}A_{3})y\big)   \\
    & -\lambda_{1}A_{1}Q_{C}\big(Q_{C}(I-\lambda_{3}A_{3})y-\lambda_{2}A_{2}Q_{C}(I-\lambda_{3}A_{3})y\big)\big ]\big\|\\
     = & \|(I-\lambda _{1} A_{1})Q_{C}(I-\lambda _{2} A_{2})Q_{C}(I-\lambda _{3} A_{3})x   \\
   &- (I-\lambda _{1} A_{1})Q_{C}(I-\lambda _{2} A_{2})Q_{C}(I-\lambda _{3} A_{3})y\| \\
  \leq & \alpha_{1}\alpha_{2}\alpha_{3} \|x-y\|,
\end{align*}
  and since $0< \alpha_{1}\alpha_{2}\alpha_{3} <1$ then $G$ is an $\alpha $-contraction with $\alpha=\alpha_{1}\alpha_{2}\alpha_{3} $,  hence  from Banach's contraction principle $F(G)$ is a singleton set  and hence, $\Omega$ is a singleton set i.e.,  there exists an element $p \in X$    such that  $\Omega=\{p\} $.  Since   $(d_{i}-c_{i}L_{i}^{2})>0$,   from Lemma \ref{sgjm}, $A_{i}$  is  $(d_{i}-c_{i} L_{i}^{2})$-expansive,
i.e,
 \begin{equation}\label{hgjnb}
      \| A_{i}x -A_{i}y\| \geq (d_{i} -c_{i} L_{i}^{2})\|x - y\|,
  \end{equation}
   in Theorem ~\ref{mkkjjhgyj}.
  The authors  in  \cite[ p.11]{[5]Imnang2013viscosityInterationMethod} proved (see  (3.12) in ~\cite[ p.11]{[5]Imnang2013viscosityInterationMethod}) that
  \begin{equation}\label{asercv}
    \displaystyle \lim_{n}\|A_{3}x_{n}-A_{3}p\|=0,
  \end{equation}
 for $x^{*}=p$. Now, put  $x=x_{n}$  and $y=p$ in  \eqref{hgjnb},  and  from \eqref{hgjnb} and \eqref{asercv},  we have
   \begin{equation*}
     \displaystyle \lim_{n}\|x_{n}-p\|=0.
   \end{equation*}
Hence,    $x_{n} \rightarrow p$.  As a result one of the main claims of  Theorem  ~\ref{mkkjjhgyj}  is established (note $ \Omega=\{p\} $).

 Note that the main aim of Theorem 3.1 in ~\cite{[5]Imnang2013viscosityInterationMethod} are $x_{n} \rightarrow p$  and  $$\langle q-f(q), J(q-p)\rangle \leq 0, \forall f \in \Pi_{C} , p \in \Omega.$$
Next, we show that the main  aim of Theorem 3.1 in ~\cite{[5]Imnang2013viscosityInterationMethod} can be concluded  from  the relations   (3.12) in ~\cite[ page  11]{[5]Imnang2013viscosityInterationMethod} and the proof in Theorem ~\ref{mkkjjhgyj} can  be simplified even further using the above. Note, the part of the proof between  the relations   (3.12) in ~\cite[ page  11]{[5]Imnang2013viscosityInterationMethod} to the end of the proof of Theorem 3.1 can be removed from the proof. Indeed,  since  immediately   from
 (3.12)  in  ~\cite{[5]Imnang2013viscosityInterationMethod}, we conclude that $x_{n} \rightarrow p$,  i.e., the first aim of Theorem 3.1 is concluded. The second aim of the theorem i.e.,
 $$\langle q-f(q), J(q-p)\rangle \leq 0, \forall f \in \Pi_{C} , p \in \Omega,$$
is clear, because $p=q$ ($ \Omega=\{p\} $) and $J(0)=\{0\}$.   Consequently,  the relations between (3.12) in ~\cite[ page  11]{[5]Imnang2013viscosityInterationMethod} to the end  of the proof  of Theorem 3.1  in ~\cite[ page  11]{[5]Imnang2013viscosityInterationMethod} can be removed.

\section{\textbf{Discussion}}
In this paper, a simple proof    for  the  convergence of   an algorithm by  relaxed $(u, v)$-cocoercive
mappings  due to S. Imnang is presented.

 \section{ \textbf{Conclusion}}
In this paper, a  refinement of the proof of  the results due to S. Imnang   is given.

\section{\textbf{Abbreviations}}
Not applicable
\section{\textbf{Declarations}}
\subsection{ Availability of data and material}
Please contact the authors for data requests.
\subsection{Competing interests}
The authors  declare  that they  have  no competing interests.
\subsection{ Funding}
Not applicable
\subsection{Authors’ contributions}
All authors contributed equally to the manuscript, read and approved
the final manuscript.
\subsection{ Acknowledgements}
The first   author is grateful to the University of Lorestan for their
support.

\section{\textbf{Author details}}
$^{1}$ Department  of Mathematics, Lorestan University, P.O. Box 465, Khoramabad, Lorestan, Iran, $^{2}$ School of Mathematical and  Statistical Sciences, University of Galway, Galway, Ireland, $^{3}$ Department of Mathematics
 Texas A$\&$M University-Kingsville 700 University Blvd., MSC 172 Kingsville, Texas, USA.


\end{document}